\newtheorem{theorem}{Theorem}
\newtheorem{proposition}{Proposition}
\newtheorem{corollary}{Corollary}
\newtheorem{lemma}{Lemma}
\newtheorem{definition}{Definition}
\newtheorem{remark}{Remark}
\newtheorem{example}{Example}
{\bf}{\it}
\newcommand{\R}{\mathbb R}
\newcommand{\RR}{\overline{\mathbb R}}
\newcommand{\pr}{\prime}
\newcommand{\cl}[1]{{\rm cl}#1}
\title{Second-Order Karush--Kuhn--Tucker Optimality Conditions for Vector Problems with Continuously Differentiable Data and Second-Order Constraint Qualifications}
\author{Vsevolod I. Ivanov \thanks{Department of Mathematics, Technical University of Varna, Bulgaria.
E-mail: vsevolod.ivanov@tu-varna.bg}}
\begin{document}
\maketitle
\begin{abstract}
Some necessary and sufficient optimality conditions for inequality constrained problems with continuously differentiable data were obtained in the papers [I. Ginchev and V.I. Ivanov, Second-order optimality conditions for problems with C$\sp{1}$ data, J. Math. Anal. Appl., v. 340, 2008, pp. 646--657], [V.I. Ivanov, Optimality conditions for an isolated minimum of order two in C$\sp{1}$ constrained optimization, J. Math. Anal. Appl., v. 356, 2009, pp. 30--41] and [V. I. Ivanov, Second- and first-order optimality conditions in vector optimization, Internat. J. Inform. Technol. Decis. Making, 2014, DOI: 10.1142/S0219622014500540].

In the present paper, we continue these investigations. We obtain some necessary optimality conditions of Karush--Kuhn--Tucker type for scalar and vector problems. A new second-order constraint qualification of Zangwill type is introduced. It is applied  in the optimality conditions.

{\bf Key words:} second-order KKT optimality conditions, second-order constraint qualifications, continuously differentiable inequality constrained problems

{\bf 2010 Mathematics Subject Classification:} 90C46, 90C29.
\end{abstract}

\section{Introduction}
\label{s12.3}
\setcounter{equation}{0}

What is the life of the man without his attempt to make the things in the best way? One of the tools for doing this is  mathematics and nonlinear programming in particular. Optimization became a self-dependent science after discovering the Kuhn--Tucker's optimality conditions.

Constraint qualifications (in short, CQ) play important role in the necessary optimality conditions. In this paper, we investigate second-order conditions and second-order CQ (in short, SOCQ). The SOCQ are usually connected to second-order local approximations of the feasible set. Historically, the first SOQC is due to McCormic \cite{mcc67}. In 1980 Ben-Tal obtained second-order 
Karush--Kuhn--Tucker (in short, KKT) conditions in terms of another SOCQ \cite{bt80}. Two years later Ben-Tal and Zowe \cite{ben82} derived second-order KKT conditions under another SOCQ. A SOQC of Guignard type was introduced and studied by Kawasaki \cite{kaw88}.
Aghezzaf and Hachimi \cite{agh99,hac07} obtained KKT conditions for multiobjective problems in terms of Abadie and Guignard types SOCQ. Generalizations of  Ban-Tal's SOCQ were applied by Penot \cite{pen98}, Jimenez and Novo \cite{jim04}. Optimality conditions were also obtained by Maciel, Santos, Sottosanto \cite{mac11} using two types SOCQ. 
All mentioned authors investigated twice differentiable problems or C$^2$ ones. SOCQ were applied in KKT conditions for C$^{1,1}$ problems in several works: Yang \cite{yan93} (extension of McCormic's SOCQ), 
Maeda \cite{mae04} 
(Abadie type SOCQ), Ginchev, Guerraggio, Rocca \cite{gin05} (Kuhn-Tucker type SOCQ). Second-order necessary conditions of KKT type for problems wirh continuously differentiable data were derived by Ivanov \cite{IJITDM} with the help of a SOCQ of Mangasarian-Fromovitz type.  Several more papers derived KKT conditions with first-order CQ. For example, Andreani, Echag\"ue and Schuverdt \cite{and10} applied recently in second-order results the first-order  CQ, which is called the constant rank condition.  

In the papers \cite{jmaa-1,jmaa-3}, the authors obtained various second-order optimality conditions for the nonlinear programming problem with inequality constraints

\bigskip\noindent
Minimize $\quad f(x)\quad$subject to$\quad 
g_i(x)\leqq0,\;i=1,2,...,m,$\hfill ${\rm (P)}$
\bigskip

\noindent
where the real functions $f$, $g_i$, $i=1,2,...,m$ are defined on some open set $X$ and $X\subset\R^s$.
All results are derived for functions, which do not satisfy the standard assumptions for second-order Fr\'echet differentiability. In the most results, the objective function and the constraint are continuously differentiable and the standard second-order directional derivative is applied. Some more optimality conditions for the vector problem with continuously differentiable data were obtained also by Ivanov \cite{IJITDM}.

In the present work, we continue the investigations given there. We consider the vector problem

\bigskip\noindent
Minimize $\quad f(x)\quad$subject to$\quad 
g(x)\leqq0,$\hfill ${\rm (VP)}$
\bigskip

\noindent
where $f:X\to\R^n$ and $g:X\to\R^m$ are given vector functions defined on some open set $X$ and $X\subset\R^s$.
 We introduce a new second-order CQ, which is analogous to the Zangwill CQ \cite{zan69,gio04}. It is more general than the SOCQ, introduced in \cite{jmaa-3}. We obtain second-order  KKT necessary optimality conditions for a weak local minimum in the problem (P) in terms of this CQ. Our SOCQ fits to problems with C$^1$ data. In our knowledge, it is an open question to apply SOCQ in such problems. Theorem \ref{dualcond} and Corollary \ref{dualcond(P)} generalize the first-order KKT necessary conditions in terms of Abadie CQ \cite{aba67} and Guignard CQ \cite{gui69}.  It is an open question to obtain second-order conditions that are generalizations of all first-order KKT ones. 
In the cited works the authors did not obtain such results, because they do not consider problems with arbitrary differentiable data like a lot of the first-order known results. They consider problems with twice differentiable or at least C$^{1,1}$ data. The second-order linearizing cone that we define is different from the second-order lnearizing cone from the paper of Kawasaki \cite{kaw88}. It is also different from the one, which is defined by Aghezzaf and Hachimi \cite{agh99}, but in principal both cones are similar.  


\bigskip
We proceed this section with recalling the definitions of some preliminary notions and notations.
Denote by $\R$ the set of reals and let $\RR=\R\cup\{-\infty\}\cup\{+\infty\}$, by ${\rm cl}(S)$ the closed hull of the set $S$, and by ${\rm conv}(S)$ the convex hull of $S$.

Consider the problem {\rm{(VP)}}. Denote by $S$ the set of feasible points, that is
\[
S:=\{x\in X\mid g_i(x)\le 0,\; i=1,2,...,m\}.
\]
For every feasible point $x\in S$, let $I(x)$ be the set of active constraints
\[
I(x):=\{i\in\{1,2,...,m\}\mid g_i(x)=0\}.
\]

Throughout this paper, we use the following notations comparing the vectors $x$ and $y$ with components $x_i$ and $y_i$ in finite-dimensional spaces:
\[
\begin{array}{c}
x<y\quad\textrm{if}\quad x_i<y_i\quad\textrm{for all indices }i; \\
x\leqq y\quad\textrm{if}\quad x_i\leqq y_i\quad\textrm{for all indices }i; \\
x\le y\quad\textrm{if}\quad x_i\leqq y_i\quad\textrm{for all indices }\quad i\quad\textrm{with at least one being strict.}
\end{array}
\]

\begin{definition}
A feasible point $\bar x\in S$ is called a weak local Pareto minimizer, or weakly efficient iff there exists a neigbourhood $U\ni\bar x$ such that there is no $x\in U\cap S$ with  $f(x)<f(\bar x)$. 
\end{definition}

\begin{definition}
A direction $d$ is called critical at the point $x\in S$ iff
\[\nabla f_j(x)d\leqq 0\;\text{ for all }\; j\in \{1,2,\dots,n\}\quad\text{and}\quad \nabla g_i(x)d\leqq 0\;\text{ for all }\; i\in I(x).\]
\end{definition}

For a feasible point $\bar x$ and a direction $d$, denote by $J(\bar x,d)$ and  $K(\bar x,d)$ the following sets;
\[
J(\bar x,d):=\{j\in\{1,2,\dots,n\}  \mid\nabla f_j(\bar x)d=0\},\quad
K(\bar x,d):=\{i\in  I(\bar x)\mid\nabla g_i(\bar x)d=0\}.
\]
These notations are sensible only if $\bar x$ is a local minimizer and $d$ is a critical direction.

\begin{definition}
Let the function $h:X\to\R$ with an open domain $X\subset\R^s$ be Fr\'echet differentiable at
the point $x\in X$. Then the second-order directional derivative
$h^{\pr\pr}(x,u)$ of $h$ at the point $x\in X$ in direction $u\in\R^n$
is defined as an element of $\R$ by the equality
\begin{displaymath}
h^{\pr\pr}(x,u)=\lim_{t\to +0}\,2t^{-2}[h(x+tu)-h(x)-t\nabla h(x)u].
\end{displaymath}
The function $h$ is called second-order directionally differentiable on $X$
iff the derivative $h^{\pr\pr}(x,u)$ exists for each $x\in X$ and any
direction $u\in\R^n$ and it is finite.
\end{definition}
\begin{definition}[\cite{gio04}]
Let the function $h: X\to\R$ with an open domain $X\subset\R^s$ be Fr\'echet 
differentiable at the point $x\in X$. Then $h$ is said to be pseudoconvex
at $x\in X$ iff 
\[
y\in X,\; h(y)<h(x)\quad\textrm{imply}\quad \nabla h(x)(y-x)<0.
\]
If $h$ is differentiable on $X$, then it is called
pseudoconvex on $X$ when $h$ is pseudoconvex at each $x\in X$. If the function $-h$ is pseudoconvex, then $h$ is called pseudoconcave.
\end{definition}

The following definition is due to Ginchev and Ivanov \cite{gi05}.
\begin{definition}
Consider a function $h:X\to\R$ with an open domain $X$, which is Fr\'echet  differentiable at $x\in X$ and
second-order directionally differentiable at $x\in X$ in every direction
$y-x$ such that $y\in X$, $h(y)<h(x)$, $\nabla h(x)(y-x)=0$.
The function $h$ is called second-order pseudoconvex at $x\in X$ iff  for all $y\in X$ the following implications hold:
\[
h(y)<h(x)\quad\mbox{implies}\quad\nabla h(x)(y-x)\leqq0;
\]
\[
h(y)<h(x),\;\nabla h(x)(y-x)=0\quad\mbox{imply}\quad h^{\pr\pr}(x,y-x)<0.
\]
Suppose that  $h$ is differentiable on $X$ and
second-order directionally differentiable at every $x\in X$ in each direction
$y-x$ such that $y\in X$, $h(y)<h(x)$, $\nabla h(x)(y-x)=0$.
The function $h$ is called second-order pseudoconvex on  $X$ iff it is
second-order pseudoconvex at every $x\in X$.  If $-h$ is second-order pseudoconvex, then $h$ is called second-order pseudoconcave.
\end{definition}
It follows from this definition that every differentiable pseudoconvex function is second-order pseudoconvex. The converse does not hold.

\section{A new second-order constraint qualification of Zangwill type}
\label{s11new}

Consider the problem (VP) and the following conditions:
\[
\left.
\begin{array}{@{}l}
\textrm{The functions } g_i,\; i\notin I(\bar x) \textrm{ are continuous at } \bar x; \\

\textrm{the functions } f_j,\; g_i,\; j=1,2,\dots,n,\; i\in I(\bar x) \textrm{ are continuosly diffrentiable;} \\
\textrm{If } \nabla f_j(\bar x)d=0, \textrm{ then there exists  }f^{\pr\pr}_j(\bar x,d),\\
\textrm{If } \nabla g_i(\bar x)d=0,\; i\in I(\bar x), \textrm{ then there exists  }g_i^{\pr\pr}(\bar x,d),\\
\end{array}\right\}\hfill \eqno{\rm (C)}
\]

For every feasible point $x$ and direction $d$, consider the sets:
\begin{gather*}
A(x,d):=\{z\in\R^n\mid\forall\; i\in K(x,d)\;\exists\delta_i>0:
g_i(x+td+0.5t^2z)\le 0,\quad\forall\; t\in(0,\delta_i)\}, \\
B(x,d):=\{z\in\R^n\mid\nabla g_i(x)z+g_i^{\pr\pr}(x,d)\le 0
\textrm{ for each } i\in K(x,d)\}.
\end{gather*}
By definition $A(x,d)=B(x,d)=\R^n$ if $K(x,d)=\emptyset$.

\begin{proposition}\label{lema1}
Let $\bar x$ be a feasible point for the Problem {\rm (VP)} and $d$ be a direction. Suppose that all functions $g_i$, $i\in I(\bar x)$ are continuously differentiable and there exist $g_i^{\pr\pr}(\bar x,d)$, $i\in I(\bar x)$, provided that $\nabla g_i(\bar x)d=0$. Then $ A(\bar x,d)\subset B(\bar x,d)$.
\end{proposition}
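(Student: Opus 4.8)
The plan is to establish the inclusion pointwise: I will fix an arbitrary $z\in A(\bar x,d)$ and verify that $z$ satisfies every defining inequality of $B(\bar x,d)$. If $K(\bar x,d)=\emptyset$, then both sets are the whole space by convention and there is nothing to prove, so I may assume $K(\bar x,d)\neq\emptyset$ and fix an index $i\in K(\bar x,d)$. By the definition of $K$ this index is active, whence $g_i(\bar x)=0$, and moreover $\nabla g_i(\bar x)d=0$; under the latter the hypotheses guarantee that $g_i^{\pr\pr}(\bar x,d)$ exists. Since $z\in A(\bar x,d)$, there is a $\delta_i>0$ with $g_i(\bar x+td+0.5t^2z)\le 0$ for all $t\in(0,\delta_i)$, and the goal is to pass this inequality to the limit so as to obtain $\nabla g_i(\bar x)z+g_i^{\pr\pr}(\bar x,d)\le 0$.

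The main step is to compare the value of $g_i$ along the parabolic arc $t\mapsto\bar x+td+0.5t^2z$ with its value along the ray $t\mapsto\bar x+td$. Writing $u(t)=\bar x+td+0.5t^2z$ and $v(t)=\bar x+td$, both points lie in the open set $X$ for $t$ small, and $u(t)-v(t)=0.5t^2z$. Applying the mean value theorem to $g_i$ on the segment joining $v(t)$ and $u(t)$, I obtain a point $\xi_t$ on that segment with $g_i(u(t))-g_i(v(t))=0.5t^2\,\nabla g_i(\xi_t)z$. As $t\to+0$ one has $\xi_t\to\bar x$, so the continuous differentiability of $g_i$ yields $\nabla g_i(\xi_t)z\to\nabla g_i(\bar x)z$; hence $g_i(u(t))-g_i(v(t))=0.5t^2[\nabla g_i(\bar x)z+o(1)]$ as $t\to+0$.

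Now I multiply by $2t^{-2}$ and let $t\to+0$. Because $g_i(\bar x)=0$ and $\nabla g_i(\bar x)d=0$, the definition of the second-order directional derivative reduces to $2t^{-2}g_i(v(t))\to g_i^{\pr\pr}(\bar x,d)$, while the comparison above gives $2t^{-2}[g_i(u(t))-g_i(v(t))]\to\nabla g_i(\bar x)z$. Adding these,
\[
\lim_{t\to+0}2t^{-2}g_i(u(t))=\nabla g_i(\bar x)z+g_i^{\pr\pr}(\bar x,d).
\]
Since $g_i(u(t))\le 0$ on $(0,\delta_i)$, each quantity $2t^{-2}g_i(u(t))$ is nonpositive, so its limit is nonpositive, which is exactly the inequality defining $B(\bar x,d)$ for the index $i$. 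As $i\in K(\bar x,d)$ was arbitrary, $z\in B(\bar x,d)$, proving $A(\bar x,d)\subset B(\bar x,d)$. I expect the delicate point to be the mean value step: one must check that $\xi_t$ lies in $X$ and converges to $\bar x$ (which follows from $\|\xi_t-\bar x\|\le t\norm{d}+0.5t^2\norm{z}\to0$), and that it is the \emph{continuity} of $\nabla g_i$, not mere differentiability, that forces the cross term $\nabla g_i(\xi_t)z$ to converge to $\nabla g_i(\bar x)z$ — this is precisely where the $C^1$ assumption on the active constraints enters.
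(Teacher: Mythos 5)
Your proof is correct and follows essentially the same route as the paper: both apply the mean value theorem to compare $g_i$ along the parabolic arc $\bar x+td+0.5t^2z$ with its value along the ray $\bar x+td$, use the continuity of $\nabla g_i$ to identify the limit of the cross term, and then pass the sign condition from $A(\bar x,d)$ to the limit. The only cosmetic difference is that the paper phrases the limit through an arbitrary sequence $t_k\to+0$ and the auxiliary function $\varphi_i$, while you take the continuous limit directly.
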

\begin{proof}
Suppose that $i\in I(\bar x)$ with $\nabla g_i(\bar x)d=0$ and $z\in A(\bar x,d)$.
Then there exists $\delta_i>0$ such that 
\begin{equation}\label{2}
g_i(\bar x+td+0.5t^2z)-g_i(\bar x)\le 0,\quad\forall\; t\in(0,\delta_i).
\end{equation}
Consider the function of one variable
$\varphi_i(t)=g_i(\bar x+td+0.5t^2z)$.
Since $X$ is open and $\bar x$ is feasible, then there exists a number $\delta_i>0$ such that $\varphi_i$ is defined for all numbers $t$ with $-\delta_i<t<\delta_i$. The following equality holds:
\[
\varphi^\pr_i(t)=\nabla g_i(\bar x+td+0.5t^2z)(d+tz).
\]
Therefore $\varphi^\pr_i(0)=\nabla g_i(\bar x)d$. Consider the differential quotient
\[
2t^{-2}[\varphi_i(t)-\varphi_i(0)-t\varphi^\pr_i(0)]
=2t^{-2}[g_i(\bar x+td+0.5t^2 z)-g_i(\bar x)-t\nabla g_i(\bar x)d].
\]
Let us choose an arbitrary sequence $\{t_k\}_{k=1}^\infty$ of positive numbers converging to 0.
According to the mean-value theorem, for every positive integer $k$ there exists $\theta^i_k\in(0,1)$ with
\begin{equation}\label{1}
g_i(\bar x+t_k d+0.5 t_k^2 z)=g_i(\bar x+t_k d)+\nabla g_i(\bar x+t_k d+0.5 t_k^2 \theta^i_k z)(0.5 t_k^2 z).
\end{equation}
It follows from $g_i\in\rm{C}^1$  
 and (\ref{1}) that
\[
\varphi^{\pr\pr}_i(0,1)=\lim_{k\to+\infty}[\nabla g_i(\bar x+t_k d+0.5t^2_k\theta^i_k z)z+
\]
\[
2t^{-2}_k (g_i(\bar x+t_k d)-g_i(\bar x)-t_k\nabla g_i(\bar x)d)]=\nabla g_i(\bar x)z+g^{\pr\pr}_i(\bar x,d).
\]
Therefore 
\begin{equation}\label{3}
\nabla g_i(\bar x)z+g^{\pr\pr}_i(\bar x,d)=\varphi^{\pr\pr}_i(0,1).
\end{equation}
It follows from (\ref{2}) and (\ref{3}) that
\[
\nabla g_i(\bar x)z+g_i^{\pr\pr}(\bar x,d)=\lim_{t\to +0}\, 2t^{-2}[g_i(\bar x+td+0.5t^2 z)-g_i(\bar x)]\le0,
\]
which proves that $A(\bar x,d)\subset B(\bar x,d)$. 
\end{proof}

The following example shows that the converse claim of Proposition \ref{lema1} does not hold.
\begin{example}
Consider the function $g:\R^2\to\R$, defined by $g(x_1,x_2)=x_1^3$. Choose
$\bar x=(0,0)$, $d=(1,0)$. We have
\[
A(\bar x,d)=\{z\in\R^2\mid\nabla g(\bar x)(d)=0\;\Rightarrow\; \exists\delta>0:
g(\bar x+td+0.5t^2z)\le 0\;\forall\; t\in(0,\delta)\}
\]
\[
B(\bar x,d)=\{z\in\R^2\mid \nabla g(\bar x)(d)=0\textrm{ implies }
\nabla g(\bar x)z+g^{\pr\pr}(\bar x,d)\le 0\},
\]
$\nabla g(\bar x)=(0,0)$, $g^{\pr\pr}(\bar x,d)=0$. If  $z=(1,0)$, then
$g(\bar x+td+0.5t^2 z)>0$ for all $t>0$,
Therefore $z\in B(\bar x,d)$, but $z\notin A(\bar x,d)$.
\end{example}


\begin{definition}
Consider a function of one variable $\varphi:(-a,a)\to\R$, which is Fr\'echet differentiable at the point $t=0$ and there exists its second-order right derivative

\centerline{$\varphi^{\pr\pr}(0,1):=\lim_{t\to +0}2 t^{-2}\left[\varphi(t)-\varphi(0)-t\varphi^\pr(0)\right].$}

\noindent
Then we call $\varphi$ second-order locally pseudoconcave at $t=0$ on the right, iff there exists $\delta>0$ such that
\[
\begin{array}{c}
\varphi(t)>\varphi(0),\; 0<t<\delta\quad\textrm{implies}\quad\varphi^\pr(0)\ge 0,\\
\varphi(t)>\varphi(0),\; 0<t<\delta,\; \varphi^\pr(0)=0\quad\textrm{implies}\quad\varphi^{\pr\pr}(0,1)>0.
\end{array}
\]
\end{definition}

The condition the constraint functions $g_i$, $i\in I(\bar x)$ to be pseudoconcave at $\bar x$, where $\bar x$ is the local minimizer, is called the weak reverse constraint qualification \cite[p. 253]{gio04}. The respective second-order condition is the assumption that $g_i$, $i\in I(\bar x)$ are second-order pseudoconcave at $\bar x$. This condition is weaker than the respective first-order one, because every pseudoconvex function is second-order pseudoconvex, but the inverse claim is not true. The CQ that the functions of one variable $\varphi_i(t)$, which are defined by the equality 
\begin{equation}\label{12.65}
\varphi_i(t)=g_i(\bar x+td+0.5t^2z),\quad t\in\R
\end{equation}
are second-order locally pseudoconcave at $t=0$ on the right, is a weaker second-order CQ.

\begin{proposition}
Let the constraint functions satisfy Conditions {\rm (C)}.
Suppose that $\bar x$ is a feasible point for {\rm (VP)} and $d$ is an arbitrary direction. Let the functions of one variable $\varphi_i$, $i\in K(\bar x,d)$, defined by {\rm (\ref{12.65})}, be second-order locally pseudoconcave at the point $t=0$ on the right  for every $z\in\R^n$. Then

\centerline{$A(\bar x,d)=B(\bar x,d).$}
\end{proposition}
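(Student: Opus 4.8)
The plan is to combine Proposition \ref{lema1} with the new pseudoconcavity hypothesis, treating the two inclusions separately. One inclusion is already in hand: the assumptions of Proposition \ref{lema1} (continuous differentiability of the $g_i$, $i\in I(\bar x)$, together with the existence of $g_i^{\pr\pr}(\bar x,d)$ whenever $\nabla g_i(\bar x)d=0$) are all consequences of Conditions {\rm (C)}, so that Proposition \ref{lema1} yields $A(\bar x,d)\subset B(\bar x,d)$ directly. Hence the whole work reduces to proving the reverse inclusion $B(\bar x,d)\subset A(\bar x,d)$. The trivial case $K(\bar x,d)=\emptyset$ needs no argument, since both sets equal the whole space by convention.

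To establish $B(\bar x,d)\subset A(\bar x,d)$, I would fix an arbitrary $z\in B(\bar x,d)$ and, for each $i\in K(\bar x,d)$, reuse the one-variable function $\varphi_i(t)=g_i(\bar x+td+0.5t^2z)$ from {\rm (\ref{12.65})}. The first step is to record its data at $t=0$. Because $i\in K(\bar x,d)\subset I(\bar x)$, the constraint is active, so $\varphi_i(0)=g_i(\bar x)=0$, and by the defining property of $K(\bar x,d)$ we have $\varphi_i^\pr(0)=\nabla g_i(\bar x)d=0$. Moreover, the computation already performed in the proof of Proposition \ref{lema1}, namely equality {\rm (\ref{3})}, shows that the second-order right derivative exists and satisfies
\[
\varphi_i^{\pr\pr}(0,1)=\nabla g_i(\bar x)z+g_i^{\pr\pr}(\bar x,d).
\]
Since $z\in B(\bar x,d)$, the right-hand side is $\le 0$, so $\varphi_i^{\pr\pr}(0,1)\le 0$.

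The decisive step is to invoke the second-order local pseudoconcavity of $\varphi_i$ at $t=0$ on the right, read contrapositively. By hypothesis there is some $\delta_i>0$ governing the two implications of that definition. The point is that $\varphi_i^\pr(0)=0$, so the relevant implication is the second one, which asserts that the existence of a $t\in(0,\delta_i)$ with $\varphi_i(t)>\varphi_i(0)$ would force $\varphi_i^{\pr\pr}(0,1)>0$. As we have shown $\varphi_i^{\pr\pr}(0,1)\le 0$, no such $t$ can exist, and therefore $\varphi_i(t)\le\varphi_i(0)=0$ for all $t\in(0,\delta_i)$, that is, $g_i(\bar x+td+0.5t^2z)\le 0$ on $(0,\delta_i)$. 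Carrying this out for every $i\in K(\bar x,d)$ produces precisely the membership condition defining $A(\bar x,d)$, so $z\in A(\bar x,d)$, which together with Proposition \ref{lema1} gives the equality.

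I expect the only delicate point to be the correct handling of the logical form of the pseudoconcavity definition: turning the statement ``$\varphi_i(t)>\varphi_i(0)$ for some small $t$ (with $\varphi_i^\pr(0)=0$) implies $\varphi_i^{\pr\pr}(0,1)>0$'' into its contrapositive ``$\varphi_i^{\pr\pr}(0,1)\le 0$ implies $\varphi_i(t)\le\varphi_i(0)$ near $0$'', while keeping in mind that the conclusion $\varphi_i^{\pr\pr}(0,1)>0$ does not depend on $t$ and that the hypothesis is assumed for every $z$, in particular for the chosen $z\in B(\bar x,d)$. Everything else—the values of $\varphi_i(0)$ and $\varphi_i^\pr(0)$ and the identity for $\varphi_i^{\pr\pr}(0,1)$—is routine and already available from the proof of Proposition \ref{lema1}.
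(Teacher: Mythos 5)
Your proof is correct and follows essentially the same route as the paper's: Proposition \ref{lema1} gives $A(\bar x,d)\subset B(\bar x,d)$, and the second implication in the definition of second-order local pseudoconcavity, combined with the identity $\varphi_i^{\prime\prime}(0,1)=\nabla g_i(\bar x)z+g_i^{\prime\prime}(\bar x,d)$ already established in the proof of Proposition \ref{lema1}, gives the reverse inclusion. The only cosmetic difference is that you argue by contraposition for each fixed $z\in B(\bar x,d)$, whereas the paper argues by contradiction via a sequence $t_k\to +0$ with $\varphi_j(t_k)>\varphi_j(0)$; both readings of the definition lead to the same conclusion.
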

\begin{proof} 
According to Proposition \ref{lema1} it is enough to prove that $B(\bar x,d)\subseteq A(\bar x,d)$. If $K(\bar x,d)=\emptyset$, then the claim is obvious. Suppose the contrary that there exists $z\in B(\bar x,d)$, but $z\notin A(\bar x,d)$. 
It follows from $z\notin A(\bar x,d)$ that there exists
$j\in K(\bar x,d)$ and a sequence $\{t_k\}_{k=1}^\infty$, $t_k\to +0$, which consists of positive numbers with the property $\varphi_j(t_k)>\varphi_j(0)$ for each positive integer $k$. By second-order local pseudoconcavity we obtain that 
 $\varphi_j^{\pr\pr}(0,1)>0$, which implies that $z\notin B(\bar x,d)$, a contradiction.
\end{proof}

If $\bar x$ is a feasible point, then the set $B(\bar x,d)$ is closed, but $A(\bar x,d)$ is not. 

\begin{example}
Let $S=\{x=(x_1,x_2)\in\R^2\mid x_1^2-x_2\le 0\}$, $\bar x=(0,0)$, $d=(0,0)$. Then $(1,z_2)\in A(\bar x,d)$ with $z_2$ arbitrary positive number, but $(1,0)\notin A(\bar x,d)$. Therefore, $A(\bar x,d)$ is not closed.
\end{example}


\begin{definition}
We introduce the condition $\cl(A(\bar x,d))=B(\bar x,d)$ under the condition that the nonactive constraints are continuous. In the next section, we show that it is a SOCQ.
\end{definition}
We could name this SOCQ the second-order Zangwill CQ, because it is a second-order analog of the Zangwill CQ \cite{zan69} $\cl(Z(\bar x))=L(\bar x)$, where
\[
L(\bar x)=\{d\in\R^n\mid\nabla g_i(\bar x)d\leqq 0,\; i\in I(\bar x)\}
\] 
is the linearizing cone of the problem (VP) at the feasible point $\bar x$ and
\[
Z(\bar x)=\{d\in\R^n\mid\exists\delta>0: \bar x+td\in S,\quad\forall\; t\in(0,\delta)\}
\]
is the cone of the feasible directions to $S$ at $\bar x$ under the assuption that the nonactive constraints are continuous at the feasible point $\bar x$. In

\section{Second-order KKT necessary conditions for weak local minimum}

\begin{theorem}[Primal conditions]\label{OptCond}
Let $\bar x$ be a weak local minimizer of the problem {\rm (VP)} and $d$ be a critical direction.
Suppose that Conditions {\rm (C)} are satisfied. 
Assume that the constraint qualification $\cl(A(\bar x,d))=B(\bar x,d)$ holds. Then there does not exist a vector $z$ such that

\begin{gather}
\nabla f_j(\bar x)z+ f^{\pr\pr}_j(\bar x,d)<0,\quad j\in J(\bar x,d), \label{11.80}\\
\nabla g_i(\bar x)z+g_i^{\pr\pr}(\bar x,d)\leqq 0,\quad i\in K(\bar x,d)\label{11.81}
\end{gather} 
\end{theorem}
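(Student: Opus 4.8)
The plan is to argue by contradiction: assume a vector $z$ satisfies both (\ref{11.80}) and (\ref{11.81}), and then build a feasible arc issuing from $\bar x$ along which every component of $f$ decreases strictly, contradicting the weak local minimality of $\bar x$. First I would note that, since each $g_i$ takes values in $\R$, the relations (\ref{11.81}) say precisely that $z\in B(\bar x,d)$. Invoking the constraint qualification $\cl(A(\bar x,d))=B(\bar x,d)$, I obtain $z\in\cl(A(\bar x,d))$, so there is a sequence $z_k\in A(\bar x,d)$ with $z_k\to z$. Because the inequalities (\ref{11.80}) are \emph{strict} and $J(\bar x,d)$ is finite, continuity of the linear maps $z\mapsto\nabla f_j(\bar x)z$ lets me fix a single index $k$ for which $\nabla f_j(\bar x)z_k+f^{\pr\pr}_j(\bar x,d)<0$ still holds for every $j\in J(\bar x,d)$; write $z'=z_k$. (When $K(\bar x,d)=\emptyset$ the closure step is vacuous, since then $A(\bar x,d)=B(\bar x,d)=\R^n$, and one may simply take $z'=z$.)

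Next I would examine the arc $x(t)=\bar x+td+0.5t^2z'$ and verify that $x(t)\in S$ for all sufficiently small $t>0$, treating the constraints in three groups. For $i\in K(\bar x,d)$, membership $z'\in A(\bar x,d)$ gives directly $g_i(x(t))\le 0$ on some interval $(0,\delta_i)$. For the remaining active indices $i\in I(\bar x)\setminus K(\bar x,d)$, criticality of $d$ forces $\nabla g_i(\bar x)d<0$, so the first-order expansion $g_i(x(t))=t\,\nabla g_i(\bar x)d+o(t)$ is negative for small $t>0$. For the inactive indices $i\notin I(\bar x)$ one has $g_i(\bar x)<0$, and the continuity of these $g_i$ at $\bar x$ (Conditions (C)) keeps $g_i(x(t))<0$ for small $t$. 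Since there are finitely many constraints, a common positive threshold exists on which $x(t)$ is feasible.

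It then remains to show that $f(x(t))<f(\bar x)$ on a possibly smaller interval, and here I would split the objective components analogously. For $j\notin J(\bar x,d)$, criticality gives $\nabla f_j(\bar x)d<0$, whence $f_j(x(t))=f_j(\bar x)+t\,\nabla f_j(\bar x)d+o(t)<f_j(\bar x)$ for small $t>0$. For $j\in J(\bar x,d)$ I would repeat the one-variable second-order computation already performed for the constraints in the proof of Proposition \ref{lema1}: setting $\psi_j(t)=f_j(\bar x+td+0.5t^2z')$ and using $f_j\in\rm{C}^1$ together with the mean-value theorem yields $\psi_j^{\pr}(0)=\nabla f_j(\bar x)d=0$ and $\psi_j^{\pr\pr}(0,1)=\nabla f_j(\bar x)z'+f^{\pr\pr}_j(\bar x,d)$, so that $f_j(x(t))=f_j(\bar x)+0.5t^2[\nabla f_j(\bar x)z'+f^{\pr\pr}_j(\bar x,d)]+o(t^2)$, which lies strictly below $f_j(\bar x)$ for small $t>0$ by the preserved strict inequality of the first paragraph. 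Taking the minimum of the finitely many thresholds produces $t_0>0$ with $x(t)\in S$ and $f(x(t))<f(\bar x)$ for all $t\in(0,t_0)$; since $x(t)\to\bar x$, such points lie in any prescribed neighbourhood $U$ of $\bar x$, contradicting weak local minimality.

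I expect the only genuine obstacle to be the rigorous second-order expansion of $f_j$ along the \emph{curved} arc for $j\in J(\bar x,d)$, where the quadratic coefficient must come out to be exactly $\nabla f_j(\bar x)z'+f^{\pr\pr}_j(\bar x,d)$ rather than merely $f^{\pr\pr}_j(\bar x,d)$; but this is precisely the content of the limit computation carried out for $g_i$ in Proposition \ref{lema1}, and it transfers verbatim to $f_j$ under Conditions (C). Everything else reduces to consolidating finitely many positive thresholds and to the elementary continuity and criticality estimates above.
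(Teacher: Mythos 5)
Your proposal is correct and follows essentially the same route as the paper: contradiction, approximation of $z$ by elements of $A(\bar x,d)$ via the constraint qualification, feasibility of the arc $\bar x+td+0.5t^2z'$ through the three groups of constraints, and the mean-value/second-order limit computation for $f_j$ with $j\in J(\bar x,d)$. Your explicit fixing of the index $k$ so that the strict inequalities (\ref{11.80}) survive the passage from $z$ to $z_k$ is exactly the point the paper handles by taking $l$ sufficiently large, so nothing is missing.
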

\begin{proof}
The conditions (\ref{11.80}), (\ref{11.81}) can be considered as a system of inequalities. This system contains at least one inequality, because $\bar x$ is a weak local solution.
Assume the contrary that there exists a vector $z$, which satisfies (\ref{11.80}) и (\ref{11.81}). Therefore $z\in B(\bar x,d)$. Consider the following cases:

1) Let  $i\in K(\bar x,d)$. It follows from the condition $\cl(A(\bar x,d))=B(\bar x,d)$ that there exists a sequence $\{z_l\}_{l=1}^\infty$, converging to $z$, such that $z_l\in A(\bar x,d)$. Take an arbitrary positive integer $l$. Suppose that it is fixed. Therefore, there exists a number  $\delta_i>0$ with $g_i(\bar x+td+0.5t^2z_l)\le 0$ for every $t\in (0,\delta_i)$. 

2) Suppose that $i\in I(\bar x)\setminus K(\bar x,d)$.
We have $\nabla g_i(\bar x)d<0$. Therefore $\varphi^\pr_i(0)<0$, where $\varphi_i(t)=g_i(\bar x+td+0.5t^2z_l)$. It follows from here that there exists $\delta_i>0$ with $\varphi_i(t)<\varphi_i(0)$, that is $g_i(\bar x+td+0.5t^2z_l)<g_i(\bar x)=0$ for all $t\in (0,\delta_i)$. 

3) For every $i\in\{1,2,...,m\}\setminus I(\bar x)$ is satisfied the inequality $g_i(\bar x)<0$. According to the assumption that $g_i$ is continuous, there exists $\delta_i>0$ such that $g_i(\bar x+td+0.5t^2z_l)< 0$  for all $t\in(0,\delta_i)$. 

Thus, we obtain from all these cases that the point $\bar x+td+0.5t^2z_l$ is feasible for all sufficiently small positive numbers $t$. 

We consider two cases concerning the objective function.

1) Let $\nabla f_j(\bar x)d<0$. Define the function of one variable
$\psi_j(t)=f_j(\bar x+td+0.5t^2z_l)$.
Then $\psi^\pr_j(0)<0$ and hence,  there exists $\varepsilon_j>0$ with $f_j(\bar x+t d+0.5t^2 z_l)<f_j(\bar x)$ for arbitrary $t\in(0,\varepsilon_j)$.  

2) Let $j\in J(\bar x,d)$ that is $\nabla f_j(\bar x)d=0$. 
Since $X$ is open and $\bar x$ is feasible, then there exists a number $\varepsilon_j>0$ such that $\psi_j$ is defined for all numbers $t$ with $-\varepsilon_j<t<\varepsilon_j$. The following equality holds:
\[
\psi^\pr_j(t)=\nabla f_j(\bar x+td+0.5t^2z_l)(d+tz_l).
\]
Therefore $\psi^\pr_j(0)=\nabla f_j(\bar x)d$. Consider the differential quotient
\[
2t^{-2}[\psi_j(t)-\psi_j(0)-t\psi^\pr_j(0)]
=2t^{-2}[f_j(\bar x+td+0.5t^2 z_l)-f_j(\bar x)-t\nabla f_j(\bar x)d].
\]
Let us choose an arbitrary sequence $\{t_k\}_{k=1}^\infty$ of positive numbers, converging to 0.
According to the mean-value theorem, for every positive integer $k$, there exists $\theta^k_j\in(0,1)$ with
\begin{equation}\label{12.63}
f_j(\bar x+t_k d+0.5 t_k^2 z_l)=f_j(\bar x+t_k d)+\nabla f_j(\bar x+t_k d+0.5 t_k^2 \theta^k_j z_l)(0.5 t_k^2 z_l).
\end{equation}
It follows from $f\in\rm{C}^1$  
 and (\ref{12.63}) that
\[
\psi^{\pr\pr}_j(0,1)=\lim_{k\to+\infty}[\nabla f_j(\bar x+t_k d+0.5t^2_k\theta^k_j z_l)z_l+
\]
\[
2t^{-2}_k (f_j(\bar x+t_k d)-f_j(\bar x)-t_k\nabla f_j(\bar x)d)]=\nabla f_j(\bar x)z_l+f^{\pr\pr}_j(\bar x,d).
\]
Therefore, $\nabla f_j(\bar x)z_l+f^{\pr\pr}_j(\bar x,d)=\psi^{\pr\pr}_j(0,1)$. 
It follows from (\ref{11.80}) that $\nabla f_j(\bar x)z_l+ f^{\pr\pr}_j(\bar x,d)<0$ for all suficiently large numbers $l$. Therefore $\psi^{\pr\pr}_j(0,1)<0$. By $j\in J(\bar x,d)$ we have 
\[
\lim_{t\to+0}\, 2[\psi_j(t)-\psi_j(0)]/t^2<0,
\]
 which implies that there exists $\varepsilon_j>0$ such that $f_j(\bar x+t d+0.5t^2 z_l)<f_j(\bar x)$ for arbitrary $t\in(0,\varepsilon_j)$. 

Taking into account both cases, we get a contradiction to the hypothesis that $\bar x$ is a weak local minimizer, since the inequality  $f_j(\bar x+t d+0.5t^2 z_l)<f_j(\bar x)$ is satisfied for all $t\in (0,\varepsilon)$, where
$\varepsilon$ is the minimal among the positive numbers $\varepsilon_j$ and $\delta_i$.
\end{proof}

Let us consider the system with unknowns $u\in\R^n$ and $v\in\R$, where $d$ is an arbitrary critical direction:
\begin{equation}\label{11.63}
\left\{\begin{array}{l}
\nabla f_j(\bar x)u + v f^{\pr\pr}_j(\bar x,d)<0,\quad j=1,2,\dots,n \\
\nabla g_i(\bar x)u + v g_i^{\pr\pr}(\bar x,d)\leqq 0,\quad i\in I(\bar x),  \\
v>0
\end{array}\right.
\end{equation}
and the system with an unknown $u\in\R^n$:
\begin{equation}\label{11.64}
\begin{array}{l}
\nabla f_j(\bar x)u<0, \;j=1,2,\dots,n,\quad\nabla g_i(\bar x)u\leqq 0,\quad i\in I(\bar x), 
\end{array}
\end{equation}
where $\bar x$ and $d$ are a given point and a direction respectively.

\begin{lemma}\label{KuhnTucker}
Let the point $\bar x$ be a weak local solution of the problem {\rm (VP)} and $d$ be a nonzero critical direction.
Suppose that the functions $f$, $g_i$, $i\in I(\bar x)$ are Fr\'echet differentiable, the functions $g_i$, $i\notin I(\bar x)$ are continuous and in the case when $\nabla f_j(\bar x)d=0$ or $\nabla g_i(\bar x)d=0$, $i\in I(\bar x)$, there exist the second-order directional derivatives $f^{\pr\pr}_j(\bar x,d)$ or $g_i^{\pr\pr}(\bar x,d)$ respectively. Then a necessary and sufficient condition for the existance of Lagrange multipliers $\lambda=(\lambda_1,\dots,\lambda_n)$ and $\mu=(\mu_1,\dots,\mu_m)$, $\lambda\ge 0$, $\mu\geqq 0$, which satisfy KKT conditions 
\begin{equation}\label{11.65}
\begin{array}{l}
\mu_i g_i(\bar x)=0,\;i=1,2,...,m,\quad\nabla L(\bar x)=0 \\
 L^{\pr\pr}(\bar x,d)=\sum_{j=1}^n \lambda_j f^{\pr\pr}_j(\bar x,d)+\sum_{i\in I(\bar x)}\mu_i g_i^{\pr\pr}(\bar x,d)\geqq 0,
\end{array}\end{equation}
where $L$ is the Lagrange function $L=\sum_{j=1}^n \lambda_j f_j+\sum_{i=1}^m\,\mu_i g_i$,
is the condition that both systems (\ref{11.63}) and (\ref{11.64}) are not solvable.
\end{lemma}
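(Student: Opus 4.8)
The plan is to recognize that the asserted equivalence is, at bottom, a theorem of the alternative; the hypotheses on $\bar x$ serve only to guarantee that the second-order directional derivatives $f^{\pr\pr}_j(\bar x,d)$ and $g_i^{\pr\pr}(\bar x,d)$ appearing in (\ref{11.63}), (\ref{11.64}) and (\ref{11.65}) are well-defined finite numbers, so that the equivalence itself is purely linear-algebraic. I would abbreviate $a_j=\nabla f_j(\bar x)$, $\alpha_j=f^{\pr\pr}_j(\bar x,d)$ for $j=1,\dots,n$, and $b_i=\nabla g_i(\bar x)$, $\beta_i=g_i^{\pr\pr}(\bar x,d)$ for $i\in I(\bar x)$. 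Since $g_i(\bar x)<0$ for $i\notin I(\bar x)$, the complementary slackness relations $\mu_i g_i(\bar x)=0$ force $\mu_i=0$ on the inactive indices, so it suffices to produce multipliers $\lambda\geqq 0$, $\lambda\ne 0$ and $\mu_i\geqq 0$, $i\in I(\bar x)$, with $\sum_j\lambda_j a_j+\sum_i\mu_i b_i=0$ (that is, $\nabla L(\bar x)=0$) and $\sum_j\lambda_j\alpha_j+\sum_i\mu_i\beta_i\geqq 0$ (that is, $L^{\pr\pr}(\bar x,d)\geqq 0$).

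For the implication that KKT multipliers force both systems to be unsolvable I would argue by direct contraction. If some $u$ solved (\ref{11.64}), then pairing $0=\sum_j\lambda_j a_j+\sum_i\mu_i b_i$ with $u$ gives $\sum_j\lambda_j(a_j u)+\sum_i\mu_i(b_i u)=0$; but $\lambda\ne 0$ yields an index $j_0$ with $\lambda_{j_0}(a_{j_0}u)<0$, while every remaining term is $\leqq 0$, a contradiction. Likewise, if $(u,v)$ solved (\ref{11.63}), then $\sum_j\lambda_j(a_j u+v\alpha_j)+\sum_i\mu_i(b_i u+v\beta_i)$ collapses, via $\nabla L(\bar x)=0$, to $v\,(\sum_j\lambda_j\alpha_j+\sum_i\mu_i\beta_i)$, which is $\geqq 0$ because $v>0$ and $L^{\pr\pr}(\bar x,d)\geqq 0$; yet the very same expression is $<0$ since $\lambda\ne 0$. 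Both contradictions use $\lambda\ne 0$ in an essential way.

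For the converse I would apply Motzkin's transposition theorem. Writing $w=(u,v)$, $\tilde a_j=(a_j,\alpha_j)$, $\tilde b_i=(b_i,\beta_i)$ and $e=(0,\dots,0,-1)$, the system (\ref{11.63}) becomes $\tilde a_j\, w<0$, $e\, w<0$, $\tilde b_i\, w\leqq 0$, whose strict part is nonempty. Its unsolvability yields multipliers $\lambda_j\geqq 0$, $\sigma\geqq 0$, $\mu_i\geqq 0$ with the strict-row multipliers not all zero, i.e. $(\lambda,\sigma)\ne 0$, satisfying $\sum_j\lambda_j\tilde a_j+\sigma e+\sum_i\mu_i\tilde b_i=0$. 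Splitting this identity into its first $s$ coordinates and its last one gives exactly $\sum_j\lambda_j a_j+\sum_i\mu_i b_i=0$ and $\sum_j\lambda_j\alpha_j+\sum_i\mu_i\beta_i=\sigma\geqq 0$. If $\lambda\ne 0$ we already have the required KKT multipliers.

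The main obstacle is the degenerate case $\lambda=0$, and this is precisely where the second system is needed. If $\lambda=0$, then $(\lambda,\sigma)\ne 0$ forces $\sigma>0$, so the multipliers above give $\sum_i\mu_i b_i=0$ and $\sum_i\mu_i\beta_i=\sigma>0$ with $\mu\geqq 0$, but with a vanishing objective part. I would then invoke the unsolvability of (\ref{11.64}): by Motzkin it produces $\lambda'\geqq 0$, $\lambda'\ne 0$, $\mu'\geqq 0$ with $\sum_j\lambda'_j a_j+\sum_i\mu'_i b_i=0$. Setting $\hat\lambda=\lambda'$ and $\hat\mu=\mu'+\tau\mu$ keeps $\nabla L(\bar x)=0$ for every $\tau\ge 0$, preserves $\hat\lambda\geqq 0$, $\hat\lambda\ne 0$, $\hat\mu\geqq 0$, and makes the second-order quantity equal to $(\sum_j\lambda'_j\alpha_j+\sum_i\mu'_i\beta_i)+\tau\sigma$, which is $\geqq 0$ once $\tau$ is taken large, since $\sigma>0$. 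Extending $\mu$ by zeros on the inactive indices restores $\mu_i g_i(\bar x)=0$ and completes the construction. The only points demanding care are the correct strict/non-strict bookkeeping in Motzkin's theorem, which is what delivers the conclusion $(\lambda,\sigma)\ne 0$, and the combination step that trades the spurious alternative $\lambda=0$ for genuine objective multipliers coming from (\ref{11.64}).
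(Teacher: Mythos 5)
Your proof is correct, and it reaches the equivalence by a genuinely different route than the paper. The paper encodes both systems at once into a single auxiliary linear program in the variables $(u,v)$ with the constraint $v\geqq 0$ (so that $v>0$ recovers system (\ref{11.63}) and $v=0$ recovers system (\ref{11.64})), and then reads the multipliers off the dual LP via the duality theorem: infeasibility of the primal makes the feasible dual unbounded below, and an unbounded dual ray is exactly a multiplier vector with $\sum_j\lambda_j>0$. You instead apply Motzkin's transposition theorem to each system separately and then must do extra work in the degenerate case $\lambda=0$, $\sigma>0$, where you splice the purely first-order multipliers coming from (\ref{11.64}) with a large positive multiple $\tau\mu$ of the second-order ones to restore $L^{\pr\pr}(\bar x,d)\geqq 0$ while keeping $\nabla L(\bar x)=0$; this combination step is the real content of your argument and has no counterpart in the paper, where the single LP absorbs the degeneracy automatically. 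The trade-off is that the paper's proof is shorter but leans on the (slightly informally invoked) LP duality theorem and the translation between strict inequalities and the normalization $\le -1$, whereas yours makes the strict/non-strict bookkeeping and the role of the unsolvability of (\ref{11.64}) completely explicit. Your easy direction (multipliers imply unsolvability) by direct pairing against $\nabla L(\bar x)=0$ is also cleaner than the paper's appeal to duality for that implication. Both proofs share the paper's implicit assumption that $f^{\pr\pr}_j(\bar x,d)$ and $g_i^{\pr\pr}(\bar x,d)$ are available for all indices appearing in (\ref{11.63}) and (\ref{11.65}), not only those with vanishing first-order derivative along $d$, so this is not a defect of your write-up relative to the source.
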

\begin{proof}
Let $d$ be an arbitrary critical direction. Consider the linear programming problem

\bigskip
\begin{tabular}{ll}
Maximize & $0$         \\
subject to & $\nabla f_j(\bar x)u+v f^{\pr\pr}_j(\bar x,d)\le -1$,\; $j=1,2,\dots,n$ \\
& $\nabla g_i(\bar x)u+v g^{\pr\pr}_i(\bar x,d)\le 0,\; i\in I(\bar x)$ \\
& $v\geqq 0$.
\end{tabular}
\bigskip

Its dual is the following problem:

\bigskip
\begin{tabular}{ll}
Minimize & $-\sum_{j=1}^n\lambda_j$ \\
subject to & $\sum_{j=1}^n\lambda_j\nabla f_j(\bar x)+\sum_{i\in I(\bar x)}\mu_i\nabla g_i(\bar x)=0$, \\
& $\sum_{j=1}^n\lambda_j f^{\pr\pr}_j(\bar x,d)+\sum_{i\in I(\bar x)}\mu_i g^{\pr\pr}_i(\bar x,d)\ge 0,$ \\
& $\lambda\geqq 0$,\quad $\mu_i\geqq 0,\quad\forall i\in I(\bar x).$
\end{tabular}
\bigskip

\noindent
Suppose that the systems (\ref{11.63}) and (\ref{11.64}) have no solutions. Therefore, the primal problem is not solvable, because it is infeasible. According to duality theorem the dual problem also is not solvable. Since $\lambda=0$, $\mu_i=0$, $i\in I(x)$ is a feasible point, then the dual problem is unbounded from below. Therefore, there exist Lagrange multipliers, which satisfy the second-order Karush--Kuhn--Tucker conditions.

Conversely, let there exist Lagrange multipliers, which satisfy KKT conditions. Therefore, the dual problem has no solutions, because its objective function is unbounded from below over the feasible set. It follows from duality theorem that the primal problem is unsolvable, because it is infeasible. Therefore, there are no a vector $u\in\R^n$ and a number $v>0$, which form a solution of the system (\ref{11.63}), there is no a vector $u\in\R^n$, which forms a feasible point for the primal problem together with the number $v=0$. We obtain from here that the system (\ref{11.64}) is inconsistent.
\end{proof}





Let $S$ be a given set. The Bouligand tangent cone (or the contingent cone) \cite{gio04} of the set $S$
at the point $x\in\mathrm{cl}(S)$ is defined as follows:
\[
\begin{array}{c}
T(S,x):=\{u\in\R^n\mid\exists \{t_k\}, t_k>0, t_k\to +0,
\exists \{u_k\}\subset\R^n, \\
u_k\to u\textrm{ such that }
x+t_ku_k\in S\textrm{ for all positive integers }k\}.
\end{array}
\]

If $S$ is the feasible set of the problem (VP), then the condition $L(\bar x)=T(S,\bar x)$ is called the Abadie CQ \cite{aba67}.

\begin{theorem}[Dual conditions]\label{dualcond}
Let $\bar x$ be a weak local minimizer for the problem {\rm (VP)} and let $d$ be a nonzero critical direction. Suppose that Conditions {\rm (C)} are satisfied.
Suppose that the constraint qualification 
$\cl(A(\bar x,d))=B(\bar x,d)$ and the Abadie CQ  hold.
Then there exist nonnegative Lagrange multipliers $\lambda=(\lambda_1,\dots,\lambda_n)$, $\lambda\ne 0$ and $\mu=(\mu_1,...,\mu_m)$, which satisfy the second-order KKT conditions (\ref{11.65}).
\end{theorem}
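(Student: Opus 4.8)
The plan is to reduce the existence of the multipliers to Lemma \ref{KuhnTucker}, which has been set up precisely for this purpose. That lemma asserts that nonnegative multipliers $\lambda\ge 0$, $\mu\geqq 0$ satisfying the second-order KKT conditions (\ref{11.65}) exist \emph{if and only if} neither of the two systems (\ref{11.63}) and (\ref{11.64}) is solvable. Since $\bar x$ is a weak local minimizer, $d$ is a nonzero critical direction, and Conditions {\rm (C)} hold, the hypotheses of Lemma \ref{KuhnTucker} are met. Thus the whole proof comes down to establishing the simultaneous inconsistency of (\ref{11.63}) and (\ref{11.64}): the first-order content will be carried by the Abadie CQ and the genuinely second-order content by the new SOCQ, through Theorem \ref{OptCond}.

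First I would dispose of system (\ref{11.63}). Assume for contradiction that it has a solution $(u,v)$ with $v>0$. Set $z:=u/v$ and divide each inequality of (\ref{11.63}) by the positive number $v$; this yields a vector $z$ with $\nabla f_j(\bar x)z+f^{\pr\pr}_j(\bar x,d)<0$ for every $j\in\{1,\dots,n\}$ and $\nabla g_i(\bar x)z+g_i^{\pr\pr}(\bar x,d)\leqq 0$ for every $i\in I(\bar x)$. Restricting these inequalities to the subsets $J(\bar x,d)\subseteq\{1,\dots,n\}$ and $K(\bar x,d)\subseteq I(\bar x)$ gives exactly (\ref{11.80}) and (\ref{11.81}), which is impossible by Theorem \ref{OptCond}, whose hypotheses coincide with the present ones. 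Hence (\ref{11.63}) is unsolvable.

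Next I would rule out system (\ref{11.64}). Suppose it has a solution $u$. The inequalities $\nabla g_i(\bar x)u\leqq 0$, $i\in I(\bar x)$, state exactly that $u$ lies in the linearizing cone $L(\bar x)$, so the Abadie CQ $L(\bar x)=T(S,\bar x)$ places $u$ in the Bouligand tangent cone. Hence there are sequences $t_k\to +0$ and $u_k\to u$ with $\bar x+t_ku_k\in S$. Since $\nabla f_j(\bar x)u<0$ for each $j$ and the gradients are continuous, the first-order expansion $f_j(\bar x+t_ku_k)-f_j(\bar x)=t_k\nabla f_j(\bar x)u_k+o(t_k)$ is negative for all large $k$ and all $j$ simultaneously; thus $f(\bar x+t_ku_k)<f(\bar x)$ at feasible points arbitrarily close to $\bar x$, contradicting the weak local minimality of $\bar x$. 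Therefore (\ref{11.64}) is also inconsistent, and Lemma \ref{KuhnTucker} delivers the required multipliers, with $\lambda\ne 0$ as claimed.

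The step I expect to be the main obstacle is the reduction of (\ref{11.63}): one must verify that the second-order directional derivatives entering the divided inequalities are precisely those whose existence Conditions {\rm (C)} guarantee, namely $f^{\pr\pr}_j(\bar x,d)$ for $j\in J(\bar x,d)$ and $g_i^{\pr\pr}(\bar x,d)$ for $i\in K(\bar x,d)$, so that the restriction to these index sets is legitimate and Theorem \ref{OptCond} applies without modification. By contrast, the elimination of (\ref{11.64}) is the routine first-order Abadie argument. In this way all the genuinely second-order information is channeled through Theorem \ref{OptCond} and hence through the new constraint qualification $\cl(A(\bar x,d))=B(\bar x,d)$.
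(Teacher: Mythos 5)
Your proof is correct and follows essentially the same route as the paper: rule out system (\ref{11.63}) by rescaling a putative solution and invoking Theorem \ref{OptCond}, rule out system (\ref{11.64}) via the Abadie CQ, and conclude with Lemma \ref{KuhnTucker}. The only cosmetic difference is that where the paper cites the known fact $F\cap T(S,\bar x)=\emptyset$ for the descent cone $F$, you prove it directly with the first-order expansion along $\bar x+t_ku_k$, which is just the standard proof of that fact written out.
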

\begin{proof}
Let $d$ be an arbitrary critical direction. We prove that the system (\ref{11.63}) has no solutions. Let us suppose the contrary that the system (\ref{11.63}) is solvable and let $(u,v)$ be an arbitrary solution. It follows from here that there exists a point $z$, which satisfies conditions (\ref{11.80}) and (\ref{11.81}). This is a contradiction to Theorem \ref{OptCond}. 

We prove that the system (\ref{11.64}) has no solutions. Assume the contrary and let $u\in\R^n$ be a solution. Therefore, by the definition of the linearing cone, $u\in L(\bar x)$. It follows from Abadie CQ that $u\in T(S,\bar x)$.
Let $F$ be the cone $F=\{d\mid\nabla f_j(\bar x)d<0,\; j=1,2,\dots,n\}$. It is known \cite[Theorem 6.6.1]{gio04} that $F\cap T(S,\bar x)=\emptyset$. On the other hand, by Abadie CQ, we have $u\in F\cap T(S,\bar x)$, which is a contradiction.

It follows from our arguments up to here that both systems (\ref{11.63}) and (\ref{11.64}) are not consistent.
Then according to Lemma \ref{KuhnTucker}  there exist Lagrange multipliers, which satisfy the second-order KKT conditions.
\end{proof}

The closed convex hull of the Bouligand tangent cone is called the pseudotangent cone \cite{gui69}, that is 
\[
PT(S,x):=\mathrm{cl}\,(\mathrm{conv}\, T(S,x)).
\]

If $S$ is the feasible set of the problem (P), then the condition $L(\bar x)=PT(S,\bar x)$ is called the Guignard CQ \cite{gui69}.

\begin{corollary}[Dual conditions for the scalar problem (P)]\label{dualcond(P)}
Let $\bar x$ be a local minimizer for the scalar problem {\rm (P)} and let $d$ be a nonzero critical direction. Suppose that Conditions {\rm (C)} are satisfied.
Suppose that the constraint qualification 
$\cl(A(\bar x,d))=B(\bar x,d)$ and the Guignard CQ  hold.
Then there exist nonnegative Lagrange multipliers $\mu_1,...,\mu_m$, which satisfy the second-order KKT conditions (\ref{11.65}) with $n=1$.
\end{corollary}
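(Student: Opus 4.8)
The plan is to follow the proof of Theorem \ref{dualcond} almost verbatim, replacing the Abadie-CQ argument by a Guignard-CQ argument that is valid precisely because the objective is scalar. First I would show that the system (\ref{11.63}) with $n=1$ has no solution. Since Conditions (C) and the constraint qualification $\cl(A(\bar x,d))=B(\bar x,d)$ hold, Theorem \ref{OptCond} applies. If $(u,v)$ were a solution of (\ref{11.63}), then setting $z=u/v$ and dividing each inequality by $v>0$ would produce a vector $z$ satisfying (\ref{11.80})--(\ref{11.81}) (using $J(\bar x,d)\subseteq\{1,\dots,n\}$ and $K(\bar x,d)\subseteq I(\bar x)$), contradicting Theorem \ref{OptCond}. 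Hence (\ref{11.63}) is inconsistent.

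The second, and essential, step is to show that the first-order system (\ref{11.64}) is inconsistent, and this is where the Guignard CQ enters. Because $\bar x$ is a local minimizer of the scalar problem (P), the first-order necessary condition gives $\nabla f(\bar x)u\geqq 0$ for every $u\in T(S,\bar x)$; equivalently, the open half-space $F=\{u\in\R^n\mid\nabla f(\bar x)u<0\}$ satisfies $F\cap T(S,\bar x)=\emptyset$, just as in the proof of Theorem \ref{dualcond}. The key observation is that, since $n=1$, the complementary set $\{u\mid\nabla f(\bar x)u\geqq 0\}$ is a single closed convex half-space. It contains $T(S,\bar x)$, hence it contains $\mathrm{conv}\,T(S,\bar x)$, and being closed it contains $PT(S,\bar x)=\cl(\mathrm{conv}\,T(S,\bar x))$. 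Therefore $F\cap PT(S,\bar x)=\emptyset$. Now suppose, for contradiction, that (\ref{11.64}) has a solution $u$. The inequalities $\nabla g_i(\bar x)u\leqq 0$, $i\in I(\bar x)$, mean $u\in L(\bar x)$, so the Guignard CQ $L(\bar x)=PT(S,\bar x)$ yields $u\in PT(S,\bar x)$; while $\nabla f(\bar x)u<0$ means $u\in F$. This contradicts $F\cap PT(S,\bar x)=\emptyset$, so (\ref{11.64}) is inconsistent as well.

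Having established that both (\ref{11.63}) and (\ref{11.64}) are unsolvable, I would invoke Lemma \ref{KuhnTucker} with $n=1$ to obtain a multiplier $\lambda_1$ together with $\mu_1,\dots,\mu_m\geqq 0$ satisfying the second-order KKT conditions (\ref{11.65}). Since $n=1$, the conclusion $\lambda\ge 0$, $\lambda\ne 0$ of the lemma forces $\lambda_1>0$, so after normalizing $\lambda_1=1$ and rescaling the $\mu_i$ accordingly I would obtain the asserted nonnegative multipliers $\mu_1,\dots,\mu_m$.

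I expect the main obstacle to lie in the second step, namely in pinning down why Guignard's weaker CQ is enough here although Theorem \ref{dualcond} had to use the stronger Abadie CQ. The crux is that for $n=1$ the set $\{u\mid\nabla f(\bar x)u\geqq 0\}$ is convex, so passing from $T(S,\bar x)$ to its closed convex hull $PT(S,\bar x)$ cannot introduce points of $F$; for a genuine vector objective ($n\ge 2$) the corresponding ``good'' set $\{u\mid\exists j:\ \nabla f_j(\bar x)u\geqq 0\}$ is a nonconvex union of half-spaces, and the convex-hull step may fail. This is exactly why the corollary is stated only for the scalar problem (P), and it is the single point of the argument requiring genuine care.
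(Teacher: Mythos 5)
Your proposal is correct and follows essentially the same route as the paper: inconsistency of system (\ref{11.63}) via Theorem \ref{OptCond}, inconsistency of system (\ref{11.64}) via the Guignard CQ together with the fact that $\nabla f(\bar x)u\geqq 0$ on $T(S,\bar x)$ extends to the closed convex hull $PT(S,\bar x)$, and then an appeal to Lemma \ref{KuhnTucker} with $n=1$. Your closing observation that convexity and closedness of the half-space $\{u\mid\nabla f(\bar x)u\geqq 0\}$ is precisely what allows Guignard to replace Abadie in the scalar case is the point the paper's proof uses implicitly when it passes from $T(S,\bar x)$ to $PT(S,\bar x)$.
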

\begin{proof}
We should prove only the part that the system (\ref{11.64}) has no solutions. Assume the contrary and let $u\in\R^n$ be a solution. Therefore, by the definition of the linearing cone, $u\in L(\bar x)$. It follows from Guignard CQ that 
$u\in PT(S,\bar x)$. On the other hand, it is well-known (see Lemma 5.1.2 from the book \cite{baz79}) that $\nabla f(\bar x)d\ge 0$ for all $d\in T(S,\bar x)$, where $T(S,\bar x)$ is the Bouligand tangent cone to the feasible set $S$ at $\bar x$. It follows from here that  $\nabla f(\bar x)d\ge 0$ for all $d\in PT(S,\bar x)$. In particular,  $\nabla f(\bar x)u\ge 0$, which contradicts the assuption that $u$ is a solution of the system (\ref{11.64}).
\end{proof}

\begin{remark}
In the case when $d=0$, Corollart \ref{dualcond(P)} reduce to the first-order KKT optimality conditions with Guignard CQ. Indeed, this direction is critical, the second-order derivatives exist and they are equal to zero. The set $A(\bar x,d)$ coincides with the cone of the feasible directions, the set $B(\bar x,d)$ coincides with the linearizing cone, the second-order Zangwill CQ reduce to Zangwill CQ, the conditions (\ref{11.65}) reduce to KKT necessary optimality conditions. Threfore, these necessary conditions are particular case of Corollary \ref{dualcond(P)}.
\end{remark}

\end{document}